\theoremstyle{plain}
\newtheorem{theorem}{Theorem}[section]
\newtheorem{proposition}[theorem]{Proposition}
\newtheorem{lemma}[theorem]{Lemma}
\theoremstyle{definition}
\newtheorem{definition}[theorem]{Definition}
\newtheorem{remark}[theorem]{Remark}
\newtheorem{conjecture}[theorem]{Conjecture}
\newcommand\Q{\mathbb{Q}}
\newcommand\frh{\mathfrak{h}}
\newcommand\frg{\mathfrak{g}}
\newcommand\frb{\mathfrak{b}}
\begin{document}

\title{On Kostant's conjecture for components of $V(\rho)\otimes V(\rho)$}
\begin{abstract}
For a complex simple Lie algebra $\mathfrak{g}$ or rank $r$, let $\rho$ be the half sum of positive roots and  $P(2\rho)\subset \mathbb{R}^r$ be the convex hull of all dominant weights  $\lambda$ of the form $\lambda=2\rho-\sum_{i=1}^r a_i\alpha_i$ with $a_i\in \mathbb{Z}_{\geq 0}$ for $1\leq i\leq r$.  

We show that if $\lambda$ is a vertex of $P(2\rho)$, then $V(\lambda)$ appears in $V(\rho) \otimes V(\rho)$ with multiplicity one, proving partially (for the vertices of $P(2\rho)$) a conjecture of Kostant describing components of $V(\rho)\otimes V(\rho)$. This result allows us to give an alternative proof for a weaker form of the conjecture (up to saturation factor) for any $\mathfrak{g}$.  Further, using works of Knutson-Tau on the saturation property of $\mathfrak{sl_{r+1}}$, our results give an alternative proof of Kostant's conjecture in the particular case $\mathfrak{g}=\mathfrak{sl_{r+1}}$.
\end{abstract}
\author{Arzu Boysal}
\address{Department of Mathematics\\ Bo\u{g}azi\c{c}i University\\
34342 \\ Istanbul\\ Turkiye.}

\email{arzu.boysal@bogazici.edu.tr}

\date{}
\maketitle \pagestyle{myheadings}

\markboth{}{}  \maketitle

\section{Introduction}

Let $\frg$ be a simple complex Lie algebra of rank $r$. We fix a Borel subalgebra $\frb$ and a Cartan subalgebra $\frh\subset \frb$ of $\frg$.  Let $\{\alpha_1, \dots, \alpha_r\}$ be the set of simple roots, and $\rho$ half sum of positive roots.  
For an integral dominant weight $\lambda \in \frh^*$, let $V(\lambda)$ be the corresponding irreducible representation of $\frg$ with highest weight $\lambda$.
Denote the multiplicity of a representation $V(\nu)$ in the tensor product $V(\lambda) \otimes V(\mu)$  by $c_{\lambda \mu}^{\nu}$.  

The exterior algebra $\Lambda^*\frg$, as an adjoint $\frg$-module, is isomorphic to the direct sum of $2^r$ copies of $V(\rho)\otimes V(\rho)$ (\cite{Ko} Section 5).  The following conjecture of B.~Kostant, describing all constituents of  $V(\rho)\otimes V(\rho)$, and thus that of $\Lambda^*\frg$, appears in Berenstein-Zelevinsky \cite{BZ}.

\begin{conjecture}\label{c:conjkostant} (Kostant) Suppose $\lambda$ is a dominant integral weight of $\frg$. Then, $c_{\rho \rho}^{\lambda}> 0$ if and only if the weight $2\rho-\lambda$ is a nonnegative integral linear combination of simple roots in $\frg$.
\end{conjecture}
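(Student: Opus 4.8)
The plan for establishing the parts of Conjecture~\ref{c:conjkostant} treated in this paper is as follows. One implication is elementary and uniform in $\frg$: if $c_{\rho\rho}^\lambda>0$, then $\lambda$, being the highest weight of a submodule of $V(\rho)\otimes V(\rho)$, is a weight of $V(\rho)\otimes V(\rho)$, hence a sum $\mu+\nu$ of two weights of $V(\rho)$; since each weight of $V(\rho)$ is $\rho$ minus a nonnegative integral combination of simple roots, the same holds for $\lambda$ with $2\rho$ in place of $\rho$, so $2\rho-\lambda$ is a nonnegative integral combination of simple roots. Thus the content lies in the converse, which we restate as: every dominant weight $\lambda$ with $2\rho-\lambda$ a nonnegative integral combination of simple roots --- equivalently, every lattice point of $P(2\rho)$ lying in $2\rho+Q$, where $Q$ is the root lattice --- satisfies $c_{\rho\rho}^\lambda>0$.

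The heart of the matter is the vertices of $P(2\rho)$, handled in two steps. First, identify them: writing a point of $P(2\rho)$ as $2\rho-\sum_j a_j\alpha_j$, the polytope is $\{(a_j): a_j\ge 0,\ \sum_j C_{ij}a_j\le 2\ \forall i\}$ with $C$ the Cartan matrix, and a direct polyhedral analysis should show its vertices are exactly the $2^r$ weights $2\rho-2\rho_I=\rho+w_{0,I}\rho$, one for each subset $I$ of the simple roots; here $2\rho_I$ is the sum of the positive roots of the parabolic sub-root system spanned by $I$, and $w_{0,I}$ is the longest element of the parabolic subgroup $W_I$, so $\rho-w_{0,I}\rho=2\rho_I$, and $\rho+w_{0,I}\rho$ is dominant since $\langle\rho+w_{0,I}\rho,\alpha_i^\vee\rangle$ equals $0$ for $i\in I$ and is at least $2$ for $i\notin I$. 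Second, show $c_{\rho\rho}^\lambda=1$ for each such vertex $\lambda$. The lower bound $c_{\rho\rho}^\lambda\ge 1$ is exactly the Parthasarathy--Ranga Rao--Varadarajan theorem with $w=w_{0,I}$: the PRV component attached to $w_{0,I}$ is $V(\rho+w_{0,I}\rho)=V(\lambda)$, because $\rho+w_{0,I}\rho$ is already dominant. The upper bound $c_{\rho\rho}^\lambda\le 1$ is the crux. One natural route: Kostant's isomorphism $\Lambda^\bullet\frg\cong(V(\rho)\otimes V(\rho))^{\oplus 2^r}$ (\cite{Ko}, Section~5) reduces it to showing that $V(2\rho-2\rho_I)$ occurs in $\Lambda^\bullet\frg$ with total multiplicity $2^r$; for $I=\{1,\dots,r\}$, i.e.\ $\lambda=0$, this is the Hopf--Koszul--Samelson description $(\Lambda^\bullet\frg)^\frg=\Lambda(x_1,\dots,x_r)$ with $\deg x_j=2e_j-1$, and for general $I$ one is reduced to a (graded) multiplicity computation in the exterior algebra. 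An alternative is a direct extremality argument: a vertex of $P(2\rho)$ is an extreme point of the support of $V(\rho)\otimes V(\rho)$, and one tries to leverage this to force the highest-weight space of weight $\lambda$ to be one-dimensional. This step --- pinning the multiplicity at the vertices to exactly $1$ --- is the one I expect to be the main obstacle.

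Once the vertices are in hand, the remainder is formal. The elementary implication gives that the support of $V(\rho)\otimes V(\rho)$ is contained in $P(2\rho)$, and the vertex statement gives $c_{\rho\rho}^v>0$ at every vertex $v$; together these show that the convex hull of the support equals $P(2\rho)$. Let $\mc$ be the saturated tensor cone $\{(\mu,\nu,\eta)\ \text{dominant}:\ c_{N\mu,N\nu}^{N\eta}>0\ \text{for some}\ N\ge 1\}$, a convex cone (Berenstein--Sjamaar, Belkale--Kumar). Since $(\rho,\rho,v)\in\mc$ for each vertex $v$ and any $\lambda\in P(2\rho)$ is a convex combination $\lambda=\sum_i t_i v_i$ of the vertices, the point $(\rho,\rho,\lambda)=\sum_i t_i(\rho,\rho,v_i)$ lies in $\mc$, so $c_{N\rho,N\rho}^{N\lambda}>0$ for some $N$. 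Applied to a dominant integral $\lambda$ with $2\rho-\lambda$ a nonnegative integral combination of simple roots, this is the converse implication of Conjecture~\ref{c:conjkostant} up to a saturation factor, for arbitrary $\frg$. Finally, for $\frg=\mathfrak{sl}_{r+1}$, the Knutson--Tao saturation theorem $c_{N\mu,N\nu}^{N\eta}>0\Rightarrow c_{\mu,\nu}^\eta>0$ allows $N=1$, yielding $c_{\rho\rho}^\lambda>0$ and hence the full conjecture in type $A$.
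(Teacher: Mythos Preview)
Your overall plan---easy direction, identify the vertices of $P(2\rho)$ as $\rho+w_J\rho$, show $c_{\rho\rho}^{\rho+w_J\rho}>0$, then pass to all of $P(2\rho)$ via the tensor semigroup and saturation (with Knutson--Tao giving the full statement in type $A$)---is exactly the paper's plan, and for the Conjecture as such your PRV lower bound at the vertices is already enough: only $c_{\rho\rho}^{v_J}\ge 1$ feeds into the saturation step. Where you diverge is in the treatment of the vertex multiplicity. You split it into PRV for $\ge 1$ and an unspecified argument for $\le 1$ (your two suggested routes, via $\Lambda^\bullet\frg$ or via ``extremality of support,'' are not carried through, and the second one does not work as stated: being an extreme weight of $V(\rho)\otimes V(\rho)$ controls weight multiplicity, not component multiplicity). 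The paper instead handles both bounds in a single stroke with the Brauer--Klimyk/Steinberg form of the character formula: writing
\[
c_{\rho\rho}^{\beta+\rho}=m_\rho(\beta)+\sum_{\substack{\beta'\in\Pi(\rho),\ w\ne 1\\ w(\beta'+2\rho)=\beta+2\rho}}\varepsilon(w)\,m_\rho(\beta'),
\]
one checks for $\beta=w_J\rho$ that the correction sum is empty by a norm computation: for any $w\ne 1$, since $w_J\rho+2\rho$ is strictly dominant and $2\rho-w(2\rho)$ is a nonzero sum of positive roots, one gets $(w_J\rho+2\rho-w(2\rho),w_J\rho+2\rho-w(2\rho))>(\rho,\rho)$, so $w_J\rho+2\rho-w(2\rho)\notin\Pi(\rho)$; hence $c_{\rho\rho}^{w_J\rho+\rho}=m_\rho(w_J\rho)=m_\rho(\rho)=1$. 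This replaces your missing upper-bound step and simultaneously reproves the lower bound without invoking PRV.
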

The necessity above is clear; if $V(\lambda)$ is a component of $V(\rho) \otimes V(\rho)$ then $2\rho-\lambda$ is a nonnegative integral linear combination of simple roots.   To our knowledge, the following is known on the conjecture to date.

In 1991, Berenstein-Zelevinsky proved the conjecture for $\frg=\mathfrak{sl_{r+1}}$, in the same article that the conjecture appeared (\cite{BZ} Theorem 6), as an application of the combinatorial method they developed for triple product multiplicities.  Further results for the multiplicities involved are given therein; they proved for $\frg=\mathfrak{sl_{r+1}}$ that the multiplicity $c_{\rho \rho}^{\lambda}=1$ if and only if $\lambda$ is a vertex of $P(2\rho)$ (\cite{BZ} Theorem 14), where $P(2\rho)$  denotes the convex hull of all dominant weights  $\lambda$ of the form $\lambda=2\rho-\sum a_i\alpha_i$ with $a_i\in \mathbb{Z}_{\geq 0}$ for $1\leq i\leq r$.

We remark that the necessity above fails for other root types; $V(\nu)$ can appear with multiplicity one  $V(\rho)\otimes V(\rho)$ for a weight $\nu$ that is not a vertex of $P(2\rho)$. This is already apparent in $B_2$, where $c_{\rho \rho}^{2\omega_2}=1$, yet $2\omega_2$ is not a vertex of $P(2\rho)$ in this case.

In 2016, Chirivi-Kumar-Maffei \cite{CKM} showed the validity of a weaker form of the conjecture, up to saturation factor (cf.~ Definition \ref{d:sfactor}).  In 2022, Nadimpalli-Pattanayak \cite{NP} obtained the same result as a special case of their branching analysis of $V(\rho)$.  More recently in 2023 Jeralds-Kumar \cite{JK} verified this weaker form for any affine Kac-Moody Lie algebra.

For simple $\frg$ of exceptional type the conjecture can be verified utilizing Lie theoretic software.

\medskip

In this manuscript we first give an elementary proof of characterization of vertices of dominant weight polytope $P(\mu)$ for any regular dominant integral weight $\mu$,  reproving results of 
Besson-Jeralds-Kiers (\cite{BJK} Theorem 3.7) and finite version of Jeralds-Kumar (\cite{JK}  Proposition 3.9, Corollary 3.10).  More precisely we prove the following.

\begin{theorem}\label{th:main3}
Suppose $\mu$ is a regular dominant integral weight of $\mathfrak{g}$.  Denote by $P(\mu)$ the polytope given by the intersection of the convex hull of the Weyl group orbit $W\cdot \mu$ and the dominant rational cone.  Then, the vertices of $P(\mu)$ are in bijective correspondence with subsets $J\subseteq \{1,2,\cdots, r\}$. A vertex $v_J$ corresponding to $J$ is equal to $$v_J=\frac{1}{|W_J|}\displaystyle \sum_{w\in W_J} w(\mu),$$ where  $W_J$ is the subgroup of the Weyl group $W$ generated by simple reflections $s_{\alpha_i}\;(i\in J)$. 
\end{theorem}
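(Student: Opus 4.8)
The plan is to introduce linear coordinates adapted to the simple roots, turning $P(\mu)$ into a polytope cut out by inequalities one can read off directly. Writing a general element of $\frh^*_{\mathbb R}$ as $x=\mu-\sum_{i=1}^r t_i\alpha_i$ gives an affine isomorphism $\mathbb{R}^r\to\frh^*_{\mathbb R}$, $t\mapsto x(t)$. Using the classical description of the convex hull of a Weyl orbit — for dominant $\mu$, a dominant weight $x$ lies in the convex hull of $W\cdot\mu$ if and only if $\mu-x\in\sum_i\mathbb{R}_{\ge0}\alpha_i$ — the condition $x(t)\in P(\mu)$ translates to: $t_i\ge0$ for all $i$ (membership in the hull) and $\langle x(t),\alpha_i^\vee\rangle\ge0$ for all $i$ (dominance). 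Since $\langle x(t),\alpha_i^\vee\rangle=m_i-\sum_j c_{ij}t_j$ with $m_i:=\langle\mu,\alpha_i^\vee\rangle>0$ (as $\mu$ is regular) and $c_{ij}:=\langle\alpha_j,\alpha_i^\vee\rangle$ the entries of the Cartan matrix $C$, the polytope $P(\mu)$ is affinely isomorphic to $\widetilde P:=\{t\in\mathbb{R}^r: t_i\ge0 \text{ and } \sum_j c_{ij}t_j\le m_i \text{ for all }i\}$. It then suffices to locate the vertices of $\widetilde P$.

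The first step is to show that, for each $J\subseteq\{1,\dots,r\}$, the weight $v_J$ lies in $P(\mu)$ and is a vertex, and that $J\mapsto v_J$ is injective. Membership is clear: $v_J$ is an average of elements of $W\cdot\mu$, hence lies in the convex hull, and it is dominant because every $w(\mu)$ with $w\in W_J$ is $\mu$ minus a nonnegative combination of the $\alpha_j$ ($j\in J$), so for $i\notin J$ one gets $\langle v_J,\alpha_i^\vee\rangle\ge\langle\mu,\alpha_i^\vee\rangle\ge0$ (using $c_{ij}\le0$ for $i\ne j$), while for $i\in J$ one has $\langle v_J,\alpha_i^\vee\rangle=0$ since $v_J$ is $W_J$-fixed. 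In the coordinates above, $v_J$ corresponds to the point $t^J$ satisfying the $r$ linear equations $t_i=0$ for $i\notin J$ (since $v_J-\mu\in\mathrm{span}\{\alpha_j:j\in J\}$) together with $\sum_j c_{ij}t_j=m_i$ for $i\in J$ (the translation of $\langle v_J,\alpha_i^\vee\rangle=0$). These $r$ equations are linearly independent: the first $r-|J|$ determine the coordinates outside $J$, and the remaining system, restricted to $\mathbb{R}^J$, has coefficient matrix the principal submatrix $C_J$, which is the Cartan matrix of the root subsystem generated by $\{\alpha_j:j\in J\}$ and hence nonsingular. Since these equalities are among the defining inequalities of $\widetilde P$ and $v_J\in\widetilde P$ is their unique common solution, $v_J$ is a vertex. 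Injectivity follows by recovering $J$ as the support $\{i:t^J_i>0\}$: indeed $t^J\ge0$ with $t^J_i=0$ for $i\notin J$, and if $t^J_{j_0}=0$ for some $j_0\in J$ then the $i=j_0$ equation $\sum_{j\in J}c_{j_0 j}t^J_j=m_{j_0}$ would have a nonpositive left-hand side but a positive right-hand side.

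The second, more delicate step is completeness: every vertex of $\widetilde P$ equals some $v_J$. Given a vertex $v$, write $v=\mu-\sum t_i\alpha_i$ with $t_i\ge0$, and set $J:=\{i:t_i>0\}$. The key point is that for $i\notin J$ the inequality $\sum_j c_{ij}t_j\le m_i$ is automatically \emph{strict} at $v$, since its left-hand side is $\sum_{j\in J}c_{ij}t_j\le0<m_i$; hence it is not active. As a vertex of $\widetilde P\subseteq\mathbb{R}^r$ has $r$ linearly independent active constraints, which here can only be among the $r-|J|$ equalities $t_i=0$ ($i\notin J$) and the $|J|$ equalities $\sum_j c_{ij}t_j=m_i$ ($i\in J$), all $|J|$ of the latter must hold, i.e.\ $C_J(t_j)_{j\in J}=(m_i)_{i\in J}$. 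Nonsingularity of $C_J$ forces $(t_j)_{j\in J}$ to be its unique solution, and $v_J$ satisfies the identical system, so $v=v_J$. Combining the three steps yields the bijection $J\leftrightarrow v_J$ and the stated barycentric formula. I expect the redundancy observation in this last step — that the dominance inequalities indexed by $i\notin J$ are strict at a vertex of support $J$, which is precisely what makes the $|J|$ remaining inequalities active and lets $C_J$ pin down the vertex — to be the crux; the only other inputs are the classical convex-hull description used in the first paragraph and the nonsingularity of the principal Cartan submatrices, both standard.
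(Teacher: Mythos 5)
Your proposal is correct, and its skeleton is the same elementary linear-algebra argument as the paper's: in both cases the heart of the matter is the complementarity ``for each $i$, either the coefficient of $\alpha_i$ in $\mu-\lambda$ vanishes or $\langle\lambda,\alpha_i^\vee\rangle=0$,'' with regularity of $\mu$ ruling out simultaneous vanishing (your strictness computation for $i\notin J$ is literally the paper's computation showing $m_j$ and $a_j$ cannot both be zero), followed by solving a nonsingular linear system and identifying its solution with the $W_J$-average. The differences are in the bookkeeping, and they are worth noting. You derive an explicit inequality description of $P(\mu)$ in root coordinates from the classical hull criterion and then invoke the standard polyhedral fact that a vertex must have $r$ linearly independent active constraints, with uniqueness coming from nonsingularity of the principal Cartan submatrix $C_J$; the paper instead proves a characterization of vertices (its Proposition 2.2, the $\min(m_i,a_i)=0$ criterion) by observing that $\{\omega_i: i\notin J\}\cup\{\alpha_j: j\in J\}$ is a basis, so the relevant linear system has a unique solution. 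Your active-constraint argument actually makes the ``unique solution of the system implies vertex'' step more explicit than the paper's rather terse phrasing. Likewise, where the paper's Lemma 2.3 establishes dominance of $\sum_{w\in W_J}w(\mu)$ and the vanishing $\langle v_J,\alpha_i^\vee\rangle=0$ for $i\in J$ via coroot sums (the sum of coroots of $\Phi_J$ is zero, positivity of $H_{w^{-1}\alpha_i}$ for $i\notin J$), you get the same facts more directly from $W_J$-invariance of $v_J$ and the fact that $\mu-w(\mu)\in\sum_{j\in J}\mathbb{R}_{\ge0}\alpha_j$ for $w\in W_J$. Net effect: identical strategy and identical inputs in substance, with your version slightly more streamlined on the vertex criterion and the paper's version avoiding any explicit appeal to the hull description beyond its equation (1) from weight theory.
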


The special case of Theorem \ref{th:main3} for $\mu=2\rho$ verifies Kostant's characterization of vertices of $P(2\rho)$, which was stated in Berenstein-Zelevinsky (\cite{BZ} Proposition 13) without proof.  This characterization gives the interpretation of vertices of $P(2\rho)$ as  dominant PRV components; a vertex $v_J$ corresponding to $J$ is equal to $\rho+w_J\rho$, where $w_J$ is the maximal element in $W_J$ (cf.~ Proposition \ref{p:kostant}).  
This particular characterisation of  vertices let us  give  self contained proofs of the following two assertions related to Conjecture \ref{c:conjkostant}. The first one below extends the sufficiency part of Berenstein-Zelevinsky (\cite{BZ} Theorem 14) to other Lie algebra types (as mentioned above the necessity fails in general), thus proving Conjecture \ref{c:conjkostant} for the vertices of $P(2\rho)$.  

\begin{theorem}\label{th:main1} Let $\frg$ be any simple finite dimensional Lie algebra over complex numbers.  Suppose $\lambda$ is a dominant integral  weight of $\frg$.

If $\lambda$ is a vertex of $P(2\rho)$, then $V(\lambda)$ appears in $V(\rho) \otimes V(\rho)$.  Moreover, in this case $c_{\rho \rho}^{\lambda}=1$. 
\end{theorem}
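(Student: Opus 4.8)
The plan is to use the PRV-type description of the vertices supplied by Theorem~\ref{th:main3} (and recorded in Proposition~\ref{p:kostant}): a vertex $\lambda$ of $P(2\rho)$ has the form $\lambda = \rho + w_J\rho$, where $w_J$ is the longest element of the parabolic subgroup $W_J$ for some $J \subseteq \{1,\dots,r\}$. The existence of $V(\lambda)$ in $V(\rho)\otimes V(\rho)$ is then immediate from the PRV theorem: for any $w \in W$ the irreducible $V(\overline{\mu + w\nu})$ (dominant representative of $\mu + w\nu$) occurs in $V(\mu)\otimes V(\nu)$, and here $\rho + w_J\rho$ is already dominant, so $c_{\rho\rho}^{\lambda} \ge 1$. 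The whole content of the theorem is therefore the multiplicity bound $c_{\rho\rho}^{\lambda} \le 1$.

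For the upper bound I would set up a weight/character comparison localized at the vertex. The key geometric fact is that $\lambda = \rho + w_J\rho$ is an \emph{extremal} point of the support of $V(\rho)\otimes V(\rho)$: every weight of $V(\rho)\otimes V(\rho)$ is of the form $\mu_1 + \mu_2$ with $\mu_i$ a weight of $V(\rho)$, i.e. $\mu_i \in \mathrm{conv}(W\cdot\rho)$, so $\mu_1+\mu_2 \in 2\,\mathrm{conv}(W\cdot\rho) = \mathrm{conv}(W\cdot 2\rho)$; intersecting with the dominant cone and using that $\lambda$ is a vertex of $P(2\rho)$, the only way to write $\lambda = \mu_1 + \mu_2$ with $\mu_i \in \mathrm{conv}(W\cdot\rho)$ is in ways that are tightly constrained. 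Concretely, I expect that the decomposition forces $\{\mu_1,\mu_2\}$ to be among the pairs $\{w\rho, w'\rho\}$ with $w\rho + w'\rho = \rho + w_J\rho$, and one then counts these. The cleanest route is: (i) compute the weight multiplicity of $\lambda$ in $V(\rho)\otimes V(\rho)$ by convolving the (multiplicity-free on the relevant face) weight multiplicities of $V(\rho)$; (ii) observe that because $\lambda$ is a vertex, any $V(\nu)$ with $c_{\rho\rho}^{\nu} > 0$ and $\lambda$ a weight of $V(\nu)$ must have $\nu \succeq \lambda$, yet $\nu \in P(2\rho)$ and $\lambda$ extremal force $\nu = \lambda$; hence $c_{\rho\rho}^{\lambda}$ equals the weight multiplicity of $\lambda$ in $V(\rho)\otimes V(\rho)$; (iii) show that weight multiplicity is exactly $1$. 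For step (iii) I would use that the weights $w\rho$ of $V(\rho)$ are all extremal (the $W$-orbit of $\rho$, each with multiplicity one since $\rho$ is minuscule-like in the sense that its orbit consists of its only extremal weights), and that the equation $w\rho + w'\rho = \rho + w_J\rho$ among orbit points has, after accounting for the stabilizer $W_J$ fixing... — wait, $W_J$ does not fix $\rho$ — rather one shows the solution set $\{(w,w') : w\rho + w'\rho = \lambda\}$ maps onto a single unordered pair, giving multiplicity one.

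Let me restate step (iii) more carefully, since it is the crux. Write $\lambda = \rho + w_J\rho$. One inclusion is clear: $(w,w') = (e, w_J)$ and $(w_J, e)$ both work, but these give the \emph{same} contribution of $1$ to the weight space only if $w_J = e$... so in fact for $J \neq \emptyset$ the pair is unordered-distinct and the naive count is $\ge 2$. Hence the multiplicity of $\lambda$ in $V(\rho)\otimes V(\rho)$ is \emph{not} simply $1$ in general, and one instead argues: $c_{\rho\rho}^{\lambda} = \dim V(\rho)\otimes V(\rho))_\lambda - \sum_{\nu} c_{\rho\rho}^{\nu} \dim V(\nu)_\lambda$ over $\nu \succ \lambda$, but there are no such $\nu$ in $P(2\rho)$ since $\lambda$ is a vertex, so this does not immediately help either. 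The real input must be a sharper extremality: $\lambda = \rho + w_J\rho$ lies on the boundary of $\mathrm{conv}(W\cdot 2\rho)$, on the face cut out by the chamber walls indexed by $J^c$, and on that face $V(\rho)$ restricted behaves like the Cartan component of a Levi, where multiplicities are controlled. I would make this precise by restricting to the Levi $\mathfrak{l}_J$: the face of $V(\rho)\otimes V(\rho)$ through $\lambda$ is governed by $V_{\mathfrak{l}_J}(\rho|)\otimes V_{\mathfrak{l}_J}(\rho|)$, and $\rho + w_J\rho|_{\mathfrak{l}_J}$ is the \emph{lowest} (or highest) weight there with a one-dimensional weight space, and parabolic induction/Littlewood--Richardson positivity on the Levi pins $c_{\rho\rho}^{\lambda} = 1$. \textbf{The main obstacle} I anticipate is exactly this reduction to the Levi and verifying that the relevant weight space on the extremal face is genuinely one-dimensional after the branching; the polytope combinatorics (Theorem~\ref{th:main3}) tells us where $\lambda$ sits but translating "$\lambda$ is a vertex" into "the face is a point and the Levi contribution is multiplicity-free" requires care about which parabolic's face contains $\lambda$ and ruling out contributions from $V(\nu)$ with $\nu$ on the same face but $\nu \ne \lambda$ (there are none, precisely because $\lambda$ is a \emph{vertex}, not just a boundary point). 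So the logical skeleton is: Theorem~\ref{th:main3} $\Rightarrow$ $\lambda = \rho + w_J\rho$ $\Rightarrow$ (PRV) $c_{\rho\rho}^\lambda \ge 1$ $\Rightarrow$ (vertex + Levi branching, the hard step) $c_{\rho\rho}^\lambda = 1$.
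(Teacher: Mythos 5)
The existence half of your argument is fine: identifying the vertex as $\lambda=\rho+w_J\rho$ via Theorem~\ref{th:main3}/Proposition~\ref{p:kostant} and invoking PRV gives $c_{\rho\rho}^{\lambda}\ge 1$, and the paper explicitly notes this route is available. The problem is the multiplicity bound $c_{\rho\rho}^{\lambda}\le 1$, which you correctly identify as the whole content of the theorem but do not actually establish. Your step (ii) rests on a false claim: it is not true that ``$\nu\in P(2\rho)$ and $\lambda$ extremal force $\nu=\lambda$.'' Being a vertex of $P(2\rho)$ does not preclude the existence of $\nu\succ\lambda$ in $P(2\rho)$; indeed $\nu=2\rho$ is such a weight whenever $J\ne\emptyset$, it satisfies $c_{\rho\rho}^{2\rho}=1$, and $\lambda$ is a weight of $V(2\rho)$. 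So $c_{\rho\rho}^{\lambda}$ does \emph{not} equal $\dim\bigl(V(\rho)\otimes V(\rho)\bigr)_\lambda$, as you yourself begin to suspect when you count the pairs $(e,w_J)$, $(w_J,e)$. Your fallback --- reduction to the Levi $\mathfrak{l}_J$ and a face/branching argument --- is only a sketch, and you explicitly flag its key verification (that the Levi contribution on the face through $\lambda$ is multiplicity-free and that no other $V(\nu)$ on that face contributes) as an anticipated obstacle rather than proving it. Such a face-reduction can be made rigorous, but it requires a genuine theorem about restriction of tensor multiplicities to regular faces of the tensor cone, which you neither state nor prove; as written the proposal has a gap exactly at the crux.

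For comparison, the paper avoids all of this by working with the Steinberg/Brauer-type expansion: writing $\chi_\rho\chi_\rho$ via the Weyl character formula, the coefficient of $\chi_{\beta+\rho}$ is $m_\rho(\beta)$ plus signed correction terms indexed by weights $\beta'\in\Pi(\rho)$ with $w_{\beta'}(\beta'+2\rho)=\beta+2\rho$ for some $w_{\beta'}\ne 1$. For $\beta=w_J\rho$ any such $\beta'$ would equal $w_J\rho+2\rho-w(2\rho)$ with $w\ne 1$, and the norm inequality $(\mu,\mu)\le(\rho,\rho)$ for $\mu\in\Pi(\rho)$ is violated for these weights, so the correction sum is empty and $c_{\rho\rho}^{\lambda}=m_\rho(w_J\rho)=1$ since $w_J\rho$ is extremal. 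That quantitative step --- or some equally concrete substitute --- is what your proposal is missing.
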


Theorem \ref{th:main1} could be verified using PRV's desription of tensor product multiplicities \cite{PRV}.  
Here instead we opt to give a proof using Weyl character formula, and a particular form of it giving the interpretation of representations as Euler-Poincare chacateristic of the cohomology groups involved (as appeared in the original setting of the problem conjectured by Kostant). This formulation manifests the correspondence between certain integral weights in $P(2\rho)$ and the indexing set in the character formula of $V(\rho)\otimes V(\rho)$. The proof is in Section \ref{s:character}.

Using Theorem \ref{th:main1}, the convexity of $P(2\rho)$, and the semigroup property of the tensor cone, we get an elementary proof for the weaker form of Kostant's conjecture up to saturation factor, and hence yet another proof for  Conjecture \ref{c:conjkostant} for $\frg=\mathfrak{sl_{r+1}}$. 
This result  also appeared in Chirivi-Kumar-Maffei \cite{CKM} and Nadimpalli-Pattanayak \cite{NP} as mentioned above.  More precisely we show the following, its proof is in Section \ref{s:saturation}.

\begin{theorem}\label{th:main2} Let $\frg$ be any simple finite dimensional Lie algebra over complex numbers. 
If $\lambda$ is a dominant integral weight of $\frg$ satisfying $2\rho-\lambda=\sum a_i\alpha_i, \;a_i\in \mathbb{Z}_{\geq 0}$, and if $d$ is a saturation factor of $\frg$, then $V(d\lambda)$ appears in $V(d\rho) \otimes V(d\rho)$. 

In particular, for $\frg=\mathfrak{sl_{r+1}}$ Kostant's conjecture \ref{c:conjkostant} holds, as the saturation factor $d=1$ in this case.
\end{theorem}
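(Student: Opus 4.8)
The plan is to reduce Theorem \ref{th:main2} to Theorem \ref{th:main1} by a convexity argument inside the tensor semigroup. The two ingredients are: (a) by Proposition \ref{p:kostant} (the $\mu=2\rho$ case of Theorem \ref{th:main3}) every vertex $v_J$ of $P(2\rho)$ is a dominant integral weight, and by Theorem \ref{th:main1} it satisfies $c_{\rho\rho}^{v_J}=1>0$; (b) the semigroup property of the tensor cone: if $c_{\lambda_1\mu_1}^{\nu_1}>0$ and $c_{\lambda_2\mu_2}^{\nu_2}>0$, then $c_{\lambda_1+\lambda_2,\,\mu_1+\mu_2}^{\nu_1+\nu_2}>0$, proved by tensoring highest weight vectors and projecting onto the Cartan component.

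Given $\lambda$ as in the statement, it is by definition one of the dominant weights whose convex hull is $P(2\rho)$, hence $\lambda\in P(2\rho)$; and since $P(2\rho)$ is a rational polytope (with integral vertices) we may write $\lambda=\sum_J t_J v_J$ with $t_J\in\mathbb{Q}_{\geq 0}$, $\sum_J t_J=1$, the sum ranging over the vertices of $P(2\rho)$. Clearing denominators, $t_J=p_J/q$ with $p_J\in\mathbb{Z}_{\geq 0}$ and $\sum_J p_J=q$, so that $q\lambda=\sum_J p_J v_J$. Adding up, via (b), $p_J$ copies of the triple $(\rho,\rho,v_J)$ over all $J$ and using (a), we obtain $c_{q\rho,\,q\rho}^{\,q\lambda}>0$; equivalently, $(\rho,\rho,\lambda)$ lies in the saturated tensor semigroup. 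This is the step where the convexity of $P(2\rho)$ enters.

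Finally, the hypothesis $2\rho-\lambda=\sum_i a_i\alpha_i$ with $a_i\in\mathbb{Z}_{\geq 0}$ shows that $2\rho-\lambda$ lies in the root lattice, so the definition of a saturation factor $d$ (Definition \ref{d:sfactor}) applies to the triple $(\rho,\rho,\lambda)$ with $N=q$, and we conclude $c_{d\rho,\,d\rho}^{\,d\lambda}>0$, i.e. $V(d\lambda)$ occurs in $V(d\rho)\otimes V(d\rho)$. For $\frg=\mathfrak{sl}_{r+1}$ the Knutson--Tao saturation theorem gives $d=1$, whence $c_{\rho\rho}^{\lambda}>0$; combined with the (evident) necessity direction noted after Conjecture \ref{c:conjkostant}, this establishes Conjecture \ref{c:conjkostant} for $\mathfrak{sl}_{r+1}$.

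I do not expect a genuine obstacle here: the statement is a formal consequence of Theorem \ref{th:main1}. The only points needing a word of care are the rationality of the coefficients $t_J$ (guaranteed because $P(2\rho)$ is a rational polytope) and the verification that $2\rho-\lambda$ belongs to the root lattice (immediate from the hypothesis), both of which are required to legitimately invoke the saturation factor.
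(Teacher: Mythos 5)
Your proposal is correct and follows essentially the same route as the paper: express (a multiple of) $\lambda$ as a nonnegative integral combination of the vertices $v_J$ of $P(2\rho)$, apply Theorem \ref{th:main1} to each vertex together with the semigroup property of the tensor cone to get $V(q\lambda)\subset V(q\rho)\otimes V(q\rho)$, and then invoke the saturation factor (with Knutson--Tao giving $d=1$ for $\mathfrak{sl}_{r+1}$). Your added remarks on the rationality of the convex coefficients and the root-lattice condition are exactly the points implicitly used in the paper's argument.
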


We emphasize that the methods used in this manuscript proving Theorem \ref{th:main1} and Theorem \ref{th:main2} do not involve any technical machinery  employed in \cite{CKM} or \cite{NP} (such as Schubert calculus, Hilbert-Mumford criteria), and have no common construction with that of \cite{BZ} (their methods work only for  $\frg=\mathfrak{sl_{r+1}}$).

\section{Preliminary constructions and vertices of dominant weight polytopes}

\subsection{Preleminaries}
Let $\frg$ be a simple Lie algebra over $\mathbb C$ of rank $r$.
Let $\Phi=R(\frh,\frg)\subset \frh^*$ be
the root system.
The choice of $\frb$ determines a set of simple roots $\Delta=\{\alpha_1, \dots, \alpha_r\}$ of $\Phi$.  Let $\frh_{\mathbb{R}}$ denote the real span of elements of $\frh$
dual to $\Delta$.  For each root $\alpha$, denote by $H_{\alpha}$ the
unique element of $[\frg_{\alpha},\frg_{-\alpha}]$ such that
$\alpha(H_{\alpha})=2$; it is called the coroot associated to the
root $\alpha$.
Let $\{\omega_i \}_{1\leq i\leq r}$ be the set of fundamental
weights, defined as the basis of $\frh^*$ dual to
$\{H_{\alpha_i}\}_{1\leq i\leq r}$. 

Let $\rho$ denote the sum of fundamental weights; it is equal to the half sum of positive roots.

Denote the Weyl group of $\frg$ by $W$, it is generated by elements $s_{\alpha}$ for $\alpha \in \Delta$; acting on $\frh_{\mathbb{R}}^*$ as $s_{\alpha}\beta=\beta-\beta(H_{\alpha})\alpha.$

We define the weight lattice
$P=\{ \lambda \in \frh^* : \lambda(H_{\alpha}) \in \mathbb{Z},\;
\forall \alpha \in \Phi \}$, and denote the set of dominant weights by
$P^+$, that is,
$$P^+:=\{ \lambda \in \frh^* : \lambda(H_{\alpha_i}) \in
\mathbb{Z}_{\geq 0}\;
\forall \alpha_i \in \Delta \},$$ where $\mathbb{Z}_{\geq 0}$ is the set of
nonnegative integers.  The  set $P^+$ parametrizes the set of isomorphism
classes of all the finite dimensional irreducible representations of $\frg$.
%
For $\lambda\in P^+$, let $V(\lambda)$
be the associated  finite dimensional irreducible
representations of $\frg$ with highest weight $\lambda$. 
Let $w_0$ denote the longest element in the Weyl group $W$.  Then for any $\lambda \in P$, the dual of $\lambda$ denoted by $\lambda^*$ is $-w_0\lambda$. 


%
%

\subsection{Characterization of vertices of dominant weight polytopes}\label{s:inpoly}
Let $\mu \in P^+$. The dominant weight polytope $P(\mu)$ associated to $\mu$ (also called the intersection polytope of $\mu$) is defined as
$$P(\mu):=\text{conv}(W\cdot \mu)\cap\mathcal C^+_\Q, $$ where $C^+_{\Q}=\{\lambda\in P_{\mathbb{Q}}: \lambda(H_{\alpha_i})\geq 0,\, \forall\,
\alpha_i \in \Delta\}$ is the dominant rational chamber.  
It is immediate from theory of weights for the irreducible $\frg$-module $V(\mu)$ that dominant rational weights $\lambda=\sum_{i=1}^r m_iw_i$ lying in $P(\mu)$ are precisely the solutions of 
\begin{equation}\label{eq:dompoly}
\mu=\sum_{i=1}^r m_iw_i+\sum_{i=1}^r a_i\alpha_i
\end{equation} 
with $a_i, m_i, \in \Q_{\geq 0}$ for $1\leq i\leq r$ such that $N\lambda$ is integral weight of $V(N\mu)$ for some positive integer $N$ (see e.g. \cite{HumpL} Proposition 21.3).

We will characterize vertices of $P(\mu)$ for $\mu$ regular, that is, for $\mu$ satisfying $\mu(H_\alpha)>0$ for each $\alpha \in \Delta$.  The characterization is also given by  Besson-Jeralds-Kiers \cite{BJK} and for the more general Kac-Moody setting by Jeralds-Kumar \cite{JK} by some intricate  methods.  Here we give a short elementary proof using linear algebra, inspired by some of the arguments in Berenstein-Zelevinsky \cite{BZ}.

\begin{proposition}\label{p:vertexpoly}
Suppose $\mu$ is a regular dominant integral weight. A dominant rational weight $\lambda=\sum_{i=1}^r m_iw_i$ is a vertex of $P(\mu)$ if and only if $\min(m_i,a_i)=0$ for all $i$ in equation (\ref{eq:dompoly}), excluding the case that $a_i$ and $m_i$ simultaneously zero for any index $i$.
\end{proposition}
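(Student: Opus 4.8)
The plan is to exploit the defining system (\ref{eq:dompoly}) as a linear-algebraic description of $P(\mu)$ and to characterize its vertices by a dimension count. Write $\lambda=\sum m_i\omega_i$ and encode a point of $P(\mu)$ by the pair of vectors $(m,a)\in\Q_{\geq 0}^r\times\Q_{\geq 0}^r$ subject to the single vector equation $\mu=\sum m_i\omega_i+\sum a_i\alpha_i$; since $\{\alpha_i\}$ (equivalently, after change of basis, $\{\omega_i\}$) is a basis of $\frh^*_\Q$, this is a system of $r$ independent affine-linear equations in the $2r$ unknowns $(m,a)$, so the feasible set lives in an $r$-dimensional affine subspace cut out by the $2r$ inequalities $m_i\geq 0$, $a_i\geq 0$. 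A point is a vertex precisely when $r$ of these inequalities are active and the active ones are linearly independent on the affine subspace. First I would argue that the projection $(m,a)\mapsto m=\lambda$ is injective on the feasible set: given $\lambda$, the coefficients $a_i$ are uniquely determined by $\mu-\lambda=\sum a_i\alpha_i$ (expand $\mu-\lambda$ in the simple-root basis). Hence vertices of $P(\mu)$ correspond bijectively to vertices of the feasible polytope in $(m,a)$-space, and I may work there.

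Next I would make the active-constraint count explicit. At a feasible point, let $Z_m=\{i:m_i=0\}$ and $Z_a=\{i:a_i=0\}$. The gradients of the active constraints are the standard basis vectors $e_i^{(m)}$ for $i\in Z_m$ and $e_i^{(a)}$ for $i\in Z_a$, viewed inside the $r$-dimensional solution space of the linear system $M m + A a = \mu$, where $M$ is the matrix of $\{\omega_i\}$ and $A$ the Cartan matrix (both invertible). Restricting these gradient functionals to that solution space, one checks — this is the technical heart — that the restricted functionals $\{e_i^{(m)}\}_{i\in Z_m}\cup\{e_i^{(a)}\}_{i\in Z_a}$ are linearly independent if and only if $Z_m\cap Z_a=\varnothing$, and that they span an $r$-dimensional space (forcing vertex-hood) if and only if in addition $Z_m\cup Z_a=\{1,\dots,r\}$, i.e. $\min(m_i,a_i)=0$ for every $i$ and never both vanish. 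The independence statement is where invertibility of the Cartan matrix (regularity of $\mu$ enters here, to guarantee the vertex is genuinely a point of $P(\mu)$ and that the relevant solutions are nonnegative) does real work: one reduces to showing that no nontrivial relation $\sum_{i\in Z_m}c_i e_i^{(m)}+\sum_{j\in Z_a}d_j e_j^{(a)}=0$ holds on the solution space unless some index lies in $Z_m\cap Z_a$, which after clearing through $M$ and $A$ becomes a statement about submatrices of $M^{-1}A$ having no vanishing complementary minors.

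I expect the main obstacle to be precisely that independence/spanning dichotomy: translating ``the active constraint normals are independent modulo the row space of $[M\,|\,A]$'' into the clean combinatorial condition $Z_m\cap Z_a=\varnothing$ requires knowing that the relevant square submatrices of $M^{-1}A$ (equivalently, of the inverse Cartan matrix composed with the weight-to-root change of basis) are nonsingular. This is where I would either invoke total positivity of the inverse Cartan matrix, or argue directly: a dependence among the active normals would express some $\alpha_j$ (for $j\in Z_a$) as a combination of $\{\alpha_i:i\notin Z_m\}$ together with $\{\omega_i : i\in Z_m\}^\perp$-type data, contradicting that the simple roots together with a complementary set of fundamental weights always form a basis. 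Once the dichotomy is established, the bijection with subsets $J\subseteq\{1,\dots,r\}$ (take $J=Z_a$, or its complement, depending on bookkeeping) is immediate and sets up Theorem~\ref{th:main3}. The remaining routine point — that each such choice of $(Z_m,Z_a)$ actually yields a feasible point with all coordinates of the correct sign — I would dispatch by exhibiting the point explicitly and deferring its identification with $\frac{1}{|W_J|}\sum_{w\in W_J}w(\mu)$ to the proof of Theorem~\ref{th:main3}.
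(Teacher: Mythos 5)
Your overall strategy is, at bottom, the same linear algebra as the paper's, written dually: you test vertexhood by asking whether the active constraint normals span, whereas the paper asks whether the reduced system $\mu=\sum_{i\notin Z_m}m_i\omega_i+\sum_{i\notin Z_a}a_i\alpha_i$ has a unique solution; both come down to whether $\{\omega_i: i\notin Z_m\}\cup\{\alpha_i: i\notin Z_a\}$ is linearly independent. The fact you flag as the ``technical heart'' and leave to total positivity or to an argument that essentially restates the claim is actually a one-line orthogonality computation: since $\omega_i(H_{\alpha_j})=\delta_{ij}$, the sets $\{\omega_i: i\notin J\}$ and $\{\alpha_j: j\in J\}$ are mutually orthogonal, each linearly independent, and together have $r$ elements, hence form a basis. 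So that part of the plan closes easily.

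The genuine gap is elsewhere: you never prove that $m_i$ and $a_i$ cannot vanish simultaneously, i.e.\ that $Z_m\cap Z_a=\varnothing$, and you misattribute the role of regularity (it is not about feasibility or nonnegativity of solutions). This disjointness is exactly the ``excluding'' clause in the statement, and it is also indispensable for the only-if direction: from ``vertex'' you get at least $r$ active constraints, i.e.\ $|Z_m|+|Z_a|\ge r$, but this yields $Z_m\cup Z_a=\{1,\dots,r\}$ (equivalently $\min(m_i,a_i)=0$ for every $i$) only if $Z_m$ and $Z_a$ are disjoint; otherwise a vertex could a priori have some index with $m_i,a_i>0$ compensated by another index with $m_j=a_j=0$. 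The correct argument, and the one place regularity is used, is: if $m_j=a_j=0$, evaluate $\mu=\sum m_i\omega_i+\sum a_i\alpha_i$ at $H_{\alpha_j}$ and use $\mu(H_{\alpha_j})>0$ together with $\alpha_i(H_{\alpha_j})\le 0$ for $i\ne j$ to reach a contradiction. Relatedly, your intermediate dichotomy ``the restricted normals are independent iff $Z_m\cap Z_a=\varnothing$'' is false as stated (already in rank $2$ with $Z_m=Z_a=\{1\}$ the two functionals $e_1^{(m)},e_1^{(a)}$ remain independent on the solution space because $\alpha_2(H_{\alpha_1})\neq 0$); it is only harmless because, once disjointness is proved, that configuration never occurs. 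Supply the disjointness lemma and the argument goes through.
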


\begin{proof}
For any $\lambda$ of the form $\lambda=\sum_{i=1}^r m_iw_i=\mu-\sum_{i=1}^r a_i\alpha_i$ with $a_i, m_i\in \mathbb{Q}_{\geq 0}$ for $1\leq i\leq r$, it is immediate from the theory of weights that $m_i$ and $a_i$ cannot be simultaneously zero for any $i$. (If  $m_j=a_j=0$ for any index $j$, evaluate both sides at $H_{\alpha_j}$, and use the regularity of $\mu$, that is $\mu(H_{\alpha_j})>0$, and $\alpha_i(H_{\alpha_j})\leq 0$ for $i\neq j$)

Now let $J:=\{i\in\{1,\cdots, r\}: m_i=0\}$.  If $\min(m_i,a_i)=0$ for all $1\leq i\leq r$, then 
by the above remark, $\{i\in\{1,\cdots, r\}: a_i=0\}=\{1,\cdots, r\}\setminus J$.  Therefore we have
\begin{equation}\label{eq:linearsys}
\mu=\sum_{i\notin J} m_iw_i+\sum_{i\in J}a_i\alpha_i.
\end{equation}
As the linearly independent sets $\{w_i, i\notin J\}$ and $\{\alpha_i, i \in J\}$ are orthogonal, their union form a basis for $\mathbb{R}^r$. Then the linear system  (\ref{eq:linearsys}) has a unique solution, hence the solution is a vertex.  

Similarly, if $\lambda=\sum_{i=1}^r m_iw_i=\mu-\sum_{i=1}^r a_i\alpha_i$ is a vertex of $P(\mu)$, then $\min(m_i,a_i)$ is necessarily zero for all $1\leq i \leq r$, since otherwise the sets 
$I_1:=\{i\in\{1,\cdots, r\}: m_i\neq 0\}$, $I_2:=\{i\in\{1,\cdots, r\}: a_i\neq 0\}$ intersect nontrivially, and as we still have $I_1\cup I_2=\{1,\cdots, r\}$, the linear system
\begin{equation}\nonumber
\mu=\sum_{i\in I_1} m_iw_i+\sum_{i\in I_2}a_i\alpha_i
\end{equation}
will not have a unique solution. 

\end{proof}

We now give the precise form of vertices of $P(\mu)$.  Some more notation is necessary.

For a subset $J\subseteq \{1,\cdots,r\}$, let $\Delta_J:=\{\alpha_i \in \Delta:i\in J\}$, and
$\Phi_J:=\Phi\cap \{\mathbb{R}\;\text{span of}\;\Delta_J\}$.
Denote by $\Phi_J^+$ the set of positive roots of the root system relative to $\Delta_J$. 
Then, $\Phi_J^+=\Phi^+\cap \Phi_J$ and negative roots $\Phi_J^-=-\Phi_J^+$. 
 
Let $W_J$ be the subgroup of $W$ generated by simple reflection $s_\alpha$ with $\alpha \in \Delta_J$; denote by $w_J$ the longest element in $W_J$.

It is well known that roots in $\Phi^+$ sent by $w\in W_J$ to negative roots are precisely the roots in $\Phi_J^+$ sent by $w$ to negative roots (see e.g.~ \cite{HumpC}).

The following facts are crucial for what follows.

\begin{lemma}\label{l:coeffgeneral} Let $J$ be a subset of $\{1,\cdots,r\}$. Suppose $\mu$ is a regular dominant integral weight.  Then,
\begin{itemize}
\item[(a)] $\displaystyle \sum_{w\in W_J} w(\mu)$ is dominant integral, and if we express 
$$\displaystyle \sum_{w\in W_J} w(\mu)=\sum_{j=1}^r m_jw_j,$$  then $m_j=0$ for $j\in J$.
\item[(b)] $\displaystyle \sum_{w\in W_J} w(\mu)=|W_J|\mu-\sum_{i\in J} b_i\alpha_i$ with $b_i$ nonnegative integers.
\end{itemize}
\end{lemma}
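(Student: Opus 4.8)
The plan is to deduce both parts from two standard ingredients: the invariance of the orbit sum $S_J:=\sum_{w\in W_J}w(\mu)$ under left multiplication by $W_J$, and the fact that $\mu-w(\mu)$ is a nonnegative integral combination of the simple roots for every $w\in W$ when $\mu$ is dominant (see e.g.~\cite{HumpL}). Throughout I write $\nu(H_\alpha)$ for evaluation, and use that $W$ preserves this pairing and intertwines the root/coroot correspondence, so $w^{-1}(H_{\alpha_i})=H_{w^{-1}(\alpha_i)}$.

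\medskip

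\noindent\emph{Part (a).} The weight $S_J$ is integral because $W$ preserves the weight lattice $P$. To check dominance, evaluate at $H_{\alpha_i}$. If $i\in J$, then $s_{\alpha_i}\in W_J$ permutes $W_J$ by left multiplication, so $s_{\alpha_i}(S_J)=S_J$; hence $S_J(H_{\alpha_i})=0$, which simultaneously proves dominance at the index $i$ and the asserted vanishing $m_i=0$ for $i\in J$. If $i\notin J$, write
\[
S_J(H_{\alpha_i})=\sum_{w\in W_J}(w\mu)(H_{\alpha_i})=\sum_{w\in W_J}\mu\big(w^{-1}(H_{\alpha_i})\big)=\sum_{w\in W_J}\mu\big(H_{w^{-1}(\alpha_i)}\big).
\]
Since $\alpha_i$ with $i\notin J$ is a positive root not lying in $\Phi_J^+$, the already-cited description of the positive roots made negative by elements of $W_J$ forces $w^{-1}(\alpha_i)\in\Phi^+$ for every $w\in W_J$. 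As $\mu$ is dominant, each summand $\mu(H_{w^{-1}(\alpha_i)})$ is $\geq 0$, so $S_J(H_{\alpha_i})\geq 0$. This gives (a).

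\medskip

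\noindent\emph{Part (b).} Write $w(\mu)=\mu-(\mu-w(\mu))$ and expand $\mu-w(\mu)=\sum_{i=1}^r c_i(w)\alpha_i$ with $c_i(w)\in\mathbb{Z}_{\geq 0}$, by the standard fact above (this may also be obtained by a short induction on the length of $w$). Because each generator $s_{\alpha_i}$ with $i\in J$ alters a weight by an integral multiple of $\alpha_i\in\Delta_J$, every $w\in W_J$ satisfies $w(\mu)-\mu\in\mathbb{Z}\Delta_J$; combined with $c_i(w)\geq 0$ for all $i$, this forces $c_i(w)=0$ whenever $i\notin J$. Summing over $w\in W_J$ then gives
\[
S_J=|W_J|\,\mu-\sum_{i\in J}b_i\alpha_i,\qquad b_i:=\sum_{w\in W_J}c_i(w)\in\mathbb{Z}_{\geq 0},
\]
as claimed.

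\medskip

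\noindent The only points requiring care are the positivity $w^{-1}(\alpha_i)\in\Phi^+$ for $i\notin J$ in part (a) and the nonnegativity of the coefficients $c_i(w)$ in part (b); neither is a genuine obstacle, but both deserve explicit statement since the classification of vertices in Theorem~\ref{th:main3}, and in turn Theorem~\ref{th:main1}, will rest on this lemma.
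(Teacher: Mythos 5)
Your proof is correct and follows essentially the same route as the paper: evaluate the orbit sum at simple coroots, use that elements of $W_J$ send $\alpha_i$ ($i\notin J$) to positive roots, and obtain (b) from the nonnegativity of $\mu-w(\mu)$ in the simple roots of $\Delta_J$. The only (harmless) variation is in the case $i\in J$ of part (a), where you use the $s_{\alpha_i}$-invariance of $S_J$ instead of the paper's observation that $\sum_{w\in W_J}H_{w^{-1}\alpha_i}=0$, and you settle for $S_J(H_{\alpha_i})\geq 0$ when $i\notin J$ where the paper notes strict positivity via regularity of $\mu$ — both suffice for the lemma as stated.
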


\begin{proof}
Part $(a)$ follows from the following computation.  For a simple root $\alpha_i$, if $i\in J$, then 
$$\displaystyle \sum_{w\in W_J} w(\mu)(H_{\alpha_i})=\displaystyle \sum_{w\in W_J} \mu(H_{w^{-1}\alpha_i})=\mu(\sum_{w\in W_J}H_{w^{-1}\alpha_i})=0,$$ as the sum of all coroots of $\Phi_J$ is zero. 
As for dominance, for any $w\in W_J$ and $i\notin J$, $w(\alpha_i)$ is some positive root in $\Phi^+\setminus \Phi_J^+$.  Then we have that 
$$\displaystyle \sum_{w\in W_J} w(\mu)(H_{\alpha_i})=\displaystyle \sum_{w\in W_J}\mu(H_{w^{-1}\alpha_i})>0$$ as each $H_{w^{-1}\alpha_i}$ is a positive coroot. Integrality is immediate.

Part $(b)$ follows immediately from the trivial equality $$\displaystyle \sum_{w\in W_J}(\mu- w(\mu))=|W_J|\mu-\displaystyle \sum_{w\in W_J}w(\mu),$$  as the left handside of the above equation is a nonnegative integral linear combination of roots in $\Delta^+_J$. 
\end{proof}


Now the claim in Theorem \ref{th:main3} is immediate.
\subsubsection{Proof of Theorem \ref{th:main3}}
\begin{proof}  By part $(b)$ of Lemma \ref{l:coeffgeneral},
$$\frac{1}{|W_J|}\displaystyle \sum_{w\in W_J}(\mu- w(\mu))=\mu-\frac{1}{|W_J|}\displaystyle \sum_{w\in W_J}w(\mu)=\displaystyle\sum_{i\in J}a_i\alpha_i$$
for some $a_i\in \Q_{\geq 0}$. Further, by part $(a)$ of Lemma \ref{l:coeffgeneral}, $\frac{1}{|W_J|}\displaystyle\sum_{w\in W_J} w(\mu)$ is of the form $$\frac{1}{|W_J|}\displaystyle\sum_{w\in W_J} w(\mu)=\sum_{i\notin J} m_iw_i$$ with $m_i\in \Q_{\geq 0}$.
Hence, it follows immediately from Proposition \ref{p:vertexpoly} that $\frac{1}{|W_J|}\sum_{w\in W_J} w(\mu)$ is a vertex of $P(\mu)$, and any vertex of $P(\mu)$ is of this form for some subset $J\subseteq \{1,2,\cdots, r\}$.
\end{proof}

The following special case of Theorem \ref{th:main3} when $\mu=2\rho$ reveals a particular characterization of vertices of $P(2\rho)$ which is attributed to Kostant in Berenstein-Zelevinsky (\cite{BZ} Proposition 13).

\begin{proposition}(Kostant).\label{p:kostant}
Vertices of $P(2\rho$) are in bijective correspondence with subsets $J\subseteq \{1,2,\cdots, r\}$. A vertex $v_J$ corresponding to $J$ is equal to $\rho+w_J\rho$, where $w_J$ is the maximal element in the Weyl group generated by reflections $s_{\alpha_i}\;(i\in J)$.
\end{proposition}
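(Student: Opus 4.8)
The plan is to specialize Theorem \ref{th:main3} to $\mu = 2\rho$ and show that the averaged vertex formula collapses to the PRV component $\rho + w_J\rho$. By Theorem \ref{th:main3} the vertices of $P(2\rho)$ are exactly the points
$$
v_J = \frac{1}{|W_J|}\sum_{w\in W_J} w(2\rho),
$$
indexed by subsets $J\subseteq\{1,\dots,r\}$, so the only thing left is to identify this average with $\rho + w_J\rho$. First I would decompose $2\rho = \rho + \rho$ and note that for any $w\in W_J$ one has $w(2\rho) = w(\rho) + w(\rho)$, but the useful move is to split $\rho$ according to the parabolic: write $\rho = \rho_J + \rho^J$, where $\rho_J$ is the half sum of positive roots in $\Phi_J^+$ and $\rho^J$ is the half sum of positive roots in $\Phi^+\setminus\Phi_J^+$. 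Since every $w\in W_J$ permutes $\Phi^+\setminus\Phi_J^+$ (the standard fact quoted just before Lemma \ref{l:coeffgeneral}), we get $w(\rho^J) = \rho^J$ for all $w\in W_J$, hence $\frac{1}{|W_J|}\sum_{w\in W_J} w(\rho^J) = \rho^J$.

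The remaining term is $\frac{2}{|W_J|}\sum_{w\in W_J} w(\rho_J)$, and here I would use the symmetry of the Weyl group $W_J$: the map $w\mapsto w w_J$ is a bijection of $W_J$, and $w_J(\rho_J) = -\rho_J$ (the longest element of $W_J$ sends $\Phi_J^+$ to $\Phi_J^-$), so pairing $w$ with $w w_J$ one finds that $\sum_{w\in W_J} w(\rho_J)$ is fixed under $w\mapsto ww_J$ yet also equals its own negative-shifted copy; more directly, $\sum_{w\in W_J}w(\rho_J)$ is $W_J$-anti-invariant in the appropriate sense and in fact one checks $\frac{1}{|W_J|}\sum_{w\in W_J} w(\rho_J) = \tfrac12(\rho_J + w_J\rho_J)$ is false in general — so instead I would argue as follows. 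Apply Lemma \ref{l:coeffgeneral}(a): $\sum_{w\in W_J} w(2\rho)$ is dominant with vanishing $\omega_i$-coefficients for $i\in J$, i.e.\ it is fixed by every $s_{\alpha_i}$, $i\in J$, hence by all of $W_J$. The same reasoning shows $\rho + w_J\rho$ is $W_J$-fixed: for $i\in J$, $s_{\alpha_i}(\rho + w_J\rho) = (\rho - \alpha_i) + s_{\alpha_i}w_J\rho = (\rho-\alpha_i) + (w_J\rho + \alpha_i) = \rho + w_J\rho$, using $\ell(s_{\alpha_i}w_J) = \ell(w_J)-1$ so $s_{\alpha_i}w_J\rho = w_J\rho + \alpha_i$. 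Therefore both $v_J$ and $\rho + w_J\rho$ are $W_J$-invariant; averaging the $W_J$-invariant vector $\rho + w_J\rho$ over $W_J$ gives itself, so it suffices to prove $v_J$ and $\rho+w_J\rho$ have the same average, i.e.
$$
\sum_{w\in W_J} w(2\rho) = |W_J|\,(\rho + w_J\rho).
$$

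I would establish this identity by a clean telescoping/averaging computation: since $w\mapsto w w_J$ bijects $W_J$ and since $w(\rho) + ww_J(\rho)$ — no; the crisp route is to write $2\rho = \rho + w_J(w_J\rho)$ is not helpful either, so I would instead verify the displayed equality by applying Lemma \ref{l:coeffgeneral}(b) to pin down the $\alpha_i$-coefficients ($i\in J$) on both sides and the already-established $W_J$-invariance plus the vanishing of $\omega_i$-coefficients for $i\in J$ to pin down the rest, reducing everything to a rank-computation inside the root subsystem $\Phi_J$ where it is the classical statement $\sum_{w\in W_J} w(\rho_J) = \tfrac{|W_J|}{2}(\rho_J + w_J\rho_J) = 0$ — wait, $\rho_J + w_J\rho_J = \rho_J - \rho_J = 0$, consistent with Lemma \ref{l:coeffgeneral}(a) giving zero $\omega_i$-coefficient for $i\in J$. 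Thus the full claim reduces to the part of $\rho$ transverse to $\Phi_J$, where every $w\in W_J$ acts trivially and the average is immediate. The main obstacle is purely bookkeeping: cleanly separating the $\Phi_J$-part of $\rho$ (which averages to $0$ in the relevant directions, matching $\rho_J + w_J\rho_J = 0$) from the transverse part (which is $W_J$-fixed and survives), and confirming that the surviving vector is exactly $\rho + w_J\rho$ rather than, say, $2\rho^J$; but these agree precisely because $\rho + w_J\rho = (\rho_J + \rho^J) + (w_J\rho_J + \rho^J) = (\rho_J + w_J\rho_J) + 2\rho^J = 2\rho^J$, closing the argument.
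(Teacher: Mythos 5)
Your argument is correct and starts from the same reduction as the paper: specialize Theorem \ref{th:main3} to $\mu=2\rho$ and identify $\frac{1}{|W_J|}\sum_{w\in W_J}w(2\rho)$ with $\rho+w_J\rho$. Where you differ is in how that identification is made. The paper either evaluates both sides of $\sum_{w\in W_J}w(2\rho)=|W_J|(\rho+w_J\rho)$ at simple coroots, or (its preferred route) checks directly that $\rho+w_J\rho$ satisfies the vertex criterion of Proposition \ref{p:vertexpoly} (it is dominant, its $\omega_j$-coefficients vanish for $j\in J$, and $\rho+w_J\rho=2\rho-\sum_{\alpha\in\Phi_J^+}\alpha$), so that uniqueness of the vertex attached to $J$ forces $v_J=\rho+w_J\rho$. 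You instead compute the orbit average outright via the decomposition $\rho=\rho_J+\rho^J$: since $W_J$ permutes $\Phi^+\setminus\Phi_J^+$ it fixes $\rho^J$, the pairing $w\mapsto ww_J$ together with $w_J\rho_J=-\rho_J$ gives $\sum_{w\in W_J}w(\rho_J)=0$, and $\rho+w_J\rho=2\rho^J$; all three facts are correct and close the identity. Both arguments rest on the same key fact (the permutation of $\Phi^+\setminus\Phi_J^+$ by $W_J$); yours has the small advantage of producing the explicit value $v_J=2\rho^J$ along the way. The one criticism is editorial rather than mathematical: the writeup contains several abandoned false starts, and at one point you declare $\frac{1}{|W_J|}\sum_{w\in W_J}w(\rho_J)=\tfrac12(\rho_J+w_J\rho_J)$ ``false in general'' only to invoke it (correctly --- both sides are zero) a few lines later; the final version should keep only the clean chain $\sum_{w\in W_J}w(2\rho)=2|W_J|\rho^J=|W_J|(\rho+w_J\rho)$.
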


\begin{proof} Using Theorem \ref{th:main3} with $\mu=2\rho$, we only need to show $$\sum_{w\in W_J} w(2\rho)=|W_J|(\rho+w_J\rho),$$ where $w_J$ is the maximal element in $W_J$.  This identity could be verified simply by evaluating both sides at each simple coroot $H_\alpha$, $\alpha \in \Delta$.  Alternatively, observe that, as $w_J$ permutes the set $\Phi^+\setminus \Phi_J^+$, the integral  weight $\rho+w_J\rho$ is dominant and when expressed as $\sum_{j=1}^r m_jw_j$, $m_j=0$ for $j\in J$.  It is also immediate that $\rho+w_J\rho=2\rho-\displaystyle\sum_{\alpha\in \Phi_J^+}\alpha.$  Thus, for each $J$, $\rho+w_J\rho$ satisfies conditions in Proposition \ref{p:vertexpoly} for being a vertex.
\end{proof}


We further remark that by characterization in Proposition \ref{p:kostant} the vertices of $P(2\rho)$ are dominant \textit{integral} weights.

\section{Character of $V(\rho)\otimes V(\rho)$, its relation to $P(2\rho)$ and proof of Theorem \ref{th:main1}}\label{s:character}

Consider the weight space decomposition for a representation $V(\mu)$, $$V(\mu)=\displaystyle \sum_{\nu\in \frh^*}m_\mu(\nu)V_\nu$$ where $V_\nu=\{v \in V(\mu): h\cdot v=\nu (h)v\;\text{for all}\; h\in \frh\}$ and $m_\mu(\nu)$ is the dimension of the weight space $V_\nu$.
Let $\Pi(\mu)$ denote the set of all weights of $V(\mu)$.

Let $\mathbb{Z}[P]$ be the group ring of $P$ with canonical basis $\{e^\mu, \mu \in P\}$.
Consider the character homomorphism $\chi: \mathcal{R}(\frg) \to
\mathbb{Z}[P]$ mapping $V \mapsto \sum_{\mu\in P}(\dim(V_{\mu}))e^{\mu}$.
For $\lambda$ dominant integral, denote $\chi(V(\lambda))$ in short by $\chi_\lambda$; it is given by
the Weyl character formula

$$\chi_{\lambda}=\frac{\sum_{w \in W}\varepsilon(w) e^{w(\lambda+\rho)}}
{\sum_{w \in W} \varepsilon(w) e^{w\rho}}.$$
Denote $D:=\sum_{w \in W} \varepsilon(w) e^{w\rho}$.  Then, using the fact that  the elements of $\Pi(\mu)$ are permuted by $W$ and $\text{dim} V_\nu=\text{dim}V_{w \nu}$ for any $w\in W$, 
\[
\begin{split}
\chi_\rho \chi_\rho &=\Big(\displaystyle \sum_{\beta \in \Pi(\rho)} m_\rho(\beta) e^\beta \Big) D^{-1}\sum_{w \in W} \varepsilon(w) e^{w(2\rho)}\\
& = D^{-1} \displaystyle \sum_{w \in W}\sum_{w\beta \in \Pi(\rho)}m_\rho(w\beta)e^{w\beta} \varepsilon(w) e^{w(2\rho)}\\
& = D^{-1} \displaystyle \sum_{w \in W}\sum_{\beta \in \Pi(\rho)}m_\rho(\beta) \varepsilon(w) e^{w(\beta+2\rho)},
\end{split}
\]
hence we get,
\begin{equation}\label{eq:decomp1}
\chi_\rho \chi_\rho =\displaystyle\sum_{\beta \in \Pi(\rho)}m_\rho(\beta) D^{-1}  \sum_{w \in W} \varepsilon(w) e^{w(\beta+2\rho)}.
\end{equation}

We now make the following observations on the configuration of weights appearing in formula (\ref{eq:decomp1}) above.  

If a weight $\beta+2\rho$ is not regular (that is if it is fixed by a Weyl group element) then the second summation in equation (\ref{eq:decomp1})  vanishes as it is W anti-invariant, and thus such a weight has no contribution in the decomposition.  

A weight $\beta$ in $\Pi(\rho)$ can be expressed  as $\beta=\rho-\sum_{\alpha_i\in\Delta} n_i \alpha_i$ with $n_i$ nonnegative integers.  Moreover, $\beta+\rho$, which lies in the root lattice, is a nonnegative integral linear combination of simple roots (as $-\rho$ is the lowest weight of $V(\rho)$).  Therefore, weights of the form $\beta +2\rho,\; \beta \in \Pi(\rho),$ lie in the `positive root cone' 
$C_Q^+:=\{\sum a_i\alpha_i: a_i\in \mathbb{R}_{> 0}\}$.  As the inverse of the Cartan  matrix associated to any simple finite dimensional Lie algebra $\frg$  has positive (rational) entries (see e.g.~\cite{K}),  $C_Q^+$ contains the dominant chamber $C^+:=\{\sum b_i\omega_i: b_i\in \mathbb{R}_{\geq 0}\}$.
In particular, a regular weight of the form $\beta+2\rho$ with $\beta\in\Pi(\rho)$ is either strictly dominant or lie in $C_Q^+\setminus C^+$.  

Now we regroup the terms in equation (\ref{eq:decomp1}) as 

\begin{equation}
\begin{split}
\chi_\rho \chi_\rho =&\displaystyle
\sum_{\substack{\{\beta \in \Pi(\rho):\\ \beta+\rho \in C^+\}}} m_\rho(\beta) D^{-1}  \sum_{w \in W} \varepsilon(w) e^{w(\beta+2\rho)}\\
&+\sum_{\substack{\{\beta' \in \Pi(\rho):\\ \beta'+2\rho \in C_Q^+\setminus C^+\}}} m_\rho(\beta') D^{-1}  \sum_{w \in W} \varepsilon(w) e^{w(\beta'+2\rho)}.
\end{split}
\end{equation}

Now for $\beta' \in \Pi(\rho)$ with $\beta'+2\rho \in C_Q^+\setminus C^+$ let $w_{\beta'}$ be the unique element of $W$ such that $\overline{\beta'+2\rho}:=w_{\beta'}(\beta'+2\rho)$ is its orbit representative (regular) in $C^+$, then by Weyl character formula again,
\begin{equation}\label{eq:decomp3}
\chi_\rho \chi_\rho =\displaystyle
\sum_{\substack{\{\beta \in \Pi(\rho):\\ \beta+\rho \in C^+\}}} m_\rho(\beta) \chi_{\beta+\rho}
+\sum_{\substack{\{\beta' \in \Pi(\rho):\\ \beta'+2\rho \in C_Q^+\setminus C^+\}}} \varepsilon(w_{\beta'})m_\rho(\beta')\chi_{w_{\beta'}(\beta'+2\rho)-\rho}.
\end{equation}

Above is a special case of a formula for tensor product multiplicities due to Steinberg  \cite{S} (this formulation is also attributed to Brauer and Klyimk independently, see \cite{HumpL}).

We will get back to the formulation in (\ref{eq:decomp3}), after we relate the sets that we are summing over above to $P(2\rho)$.

\subsection{Characterization of  weights in $P(2\rho)$}
 
Now we want to elucidate the form of other integral weights lying in $P(2\rho)$ (not just the  vertices). 

Suppose $\mu$ is a dominant integral weight  of the form  $\mu=2\rho-\sum_{i=1}^r a_i \alpha_i$  with each $a_i\in\mathbb{Z}_{\geq 0}$, by definition $\mu$ lies in $P(2\rho)$.   By Proposition \ref{p:kostant},  vertices of $P(2\rho)$  are of the form $v_J=w_J\rho+\rho$.  This particular form of the vertices immediately implies  $v_J-\rho$ is in $\Pi(\rho)$.  Now, as $P(2\rho)$ is convex, the translated set $\{\mu-\rho, \mu \in P(2\rho)\}$ lies in the convex hull of $\Pi(\rho)$. Consequently, we get that $\mu-\rho=\rho-\sum_{i=1}^r a_i \alpha_i$ is in $\Pi(\rho)$ (not just the convex hull) as $\Pi(\rho)$ is saturated (see e.g. \cite{HumpL} sections 13.4 and 21.3), call this weight $\beta$.  Consequently we get that such a $\mu$ is of the form $\mu=\rho+\beta$ for some $\beta \in \Pi(\rho)$. 
This verifies the following correspondence.

\begin{lemma}\label{l:setdescr}
The set of $\beta \in \Pi(\rho)$ with $\beta+\rho \in C^+$ is in bijection with the set of dominant integral weights $\mu$ of the form $\mu=2\rho-\sum_{i=1}^r a_i \alpha_i$ with each $a_i\in \mathbb{Z}_{\geq 0}$, where $\beta \in \Pi(\rho)$ is identified with $\beta+\rho \in P(2\rho)$.
\end{lemma}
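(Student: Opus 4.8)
The plan is to prove the bijection in Lemma~\ref{l:setdescr} by exhibiting the map $\beta \mapsto \beta + \rho$ explicitly and checking it is well-defined and invertible between the two sets. First I would fix the target set $S := \{\mu \in P^+ : \mu = 2\rho - \sum_{i=1}^r a_i\alpha_i,\ a_i \in \mathbb{Z}_{\geq 0}\}$, which is exactly the set of integral points of $P(2\rho)$, and the source set $T := \{\beta \in \Pi(\rho) : \beta + \rho \in C^+\}$. The forward direction --- that $\beta \in T$ gives $\beta + \rho \in S$ --- is the quick half: if $\beta \in \Pi(\rho)$ then $\beta = \rho - \sum n_i\alpha_i$ with $n_i \in \mathbb{Z}_{\geq 0}$ (since $\rho$ is the highest weight and weights of $V(\rho)$ differ from $\rho$ by nonnegative integral combinations of simple roots), so $\beta + \rho = 2\rho - \sum n_i\alpha_i$ with $n_i \in \mathbb{Z}_{\geq 0}$; moreover $\beta + \rho$ is integral (both $\beta$ and $\rho$ lie in $P$) and dominant precisely because $\beta + \rho \in C^+$ and it is integral, so $\beta + \rho \in S$. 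This is essentially already spelled out in the paragraph preceding the lemma.

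The substantive direction is the reverse: given $\mu \in S$, I must show $\beta := \mu - \rho$ actually lies in $\Pi(\rho)$, not merely in the convex hull of $\Pi(\rho)$. This is where the argument in the text does the real work, and it is the step I expect to be the main obstacle --- though it is more of a ``collect the right ingredients'' obstacle than a genuinely hard one. The key ingredients are: (i) the vertices of $P(2\rho)$ are of the form $v_J = \rho + w_J\rho$ by Proposition~\ref{p:kostant}, and since $w_J\rho \in \Pi(\rho)$ (it is in the Weyl orbit of the highest weight $\rho$), we get $v_J - \rho \in \Pi(\rho)$; (ii) $P(2\rho)$ is convex, so it equals the convex hull of its vertices $v_J$, hence the translate $P(2\rho) - \rho$ equals the convex hull of the points $v_J - \rho \in \Pi(\rho)$, which sits inside $\mathrm{conv}(\Pi(\rho))$; (iii) therefore $\beta = \mu - \rho \in \mathrm{conv}(\Pi(\rho)) \cap (\rho + Q)$ where $Q$ is the root lattice, and because $\Pi(\rho)$ is \emph{saturated} (the standard fact that the weight set of an irreducible representation is closed under the operation of filling in the convex hull restricted to the appropriate coset of the root lattice, see \cite{HumpL} \S13.4, \S21.3), we conclude $\beta \in \Pi(\rho)$. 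Finally $\beta + \rho = \mu \in C^+ \subset C^+$, so $\beta \in T$.

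Having established that $\beta \mapsto \beta + \rho$ maps $T$ into $S$ and $\mu \mapsto \mu - \rho$ maps $S$ into $T$, and that these are mutually inverse translations, the bijection follows. I would present it in exactly this order: (1) recall $\beta \in \Pi(\rho) \Rightarrow \beta + \rho \in S$; (2) recall vertices $v_J - \rho \in \Pi(\rho)$ from Proposition~\ref{p:kostant}; (3) invoke convexity of $P(2\rho)$ to place $\mu - \rho$ in $\mathrm{conv}(\Pi(\rho))$; (4) invoke saturation of $\Pi(\rho)$ to upgrade this to $\mu - \rho \in \Pi(\rho)$; (5) observe the two translations are inverse to each other. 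The one point requiring mild care is the integrality/coset bookkeeping in step (4): saturation only fills in weights lying in the correct coset $\rho + Q$ of the root lattice, and one should note that $\mu - \rho$ does lie in $\rho + Q$ since $\mu = 2\rho - \sum a_i\alpha_i$ with $a_i \in \mathbb{Z}$, so $\mu - \rho = \rho - \sum a_i\alpha_i \in \rho + Q$. No deeper machinery is needed.
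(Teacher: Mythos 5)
Your proof is correct and follows essentially the same route as the paper: the forward direction is the observation that weights of $V(\rho)$ have the form $\rho-\sum n_i\alpha_i$, and the reverse direction uses Proposition~\ref{p:kostant} to see that the vertices of $P(2\rho)$ translate by $-\rho$ into $\Pi(\rho)$, then convexity to place $\mu-\rho$ in $\mathrm{conv}(\Pi(\rho))$, then saturation of $\Pi(\rho)$ (your explicit check that $\mu-\rho$ lies in the coset $\rho+Q$ is a welcome detail the paper leaves implicit). The only quibble is your parenthetical claim that $S$ is ``exactly the set of integral points of $P(2\rho)$'' --- an integral weight in $P(2\rho)$ need not have $2\rho-\mu$ in the root lattice (e.g.\ $\omega_1$ in type $A_1$) --- but nothing in your argument actually uses this.
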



Now back to the decomposition formula given in Equation (\ref{eq:decomp3}).  

As is well known \cite{Ko}, components of $V(\rho)\otimes V(\rho)$ are of the form $V(\mu)$ with $\mu=\beta+\rho$ for some $\beta \in \Pi(\rho)$.  Using this and semi-simplicity of $\frg$-modules, we get from equation (\ref{eq:decomp3}) that
\begin{equation}\label{eq:decomp4}
\chi_\rho \chi_\rho =\displaystyle
\sum_{\substack{\{\beta \in \Pi(\rho):\\ \beta+\rho \in C^+\}}} \Big(m_\rho(\beta)
+\sum_{\substack{\{\beta' \in \Pi(\rho):\\ \beta'+2\rho \in C_Q^+\setminus C^+\\ \overline{\beta'+2\rho}=\beta+2\rho\}}} \varepsilon(w_{\beta'})m_\rho(\beta')\Big)\chi_{\beta+\rho}.
\end{equation}

\begin{remark}
We remark that for $\beta \in \Pi(\rho)$ satisfying $\beta+\rho \in C^+$, the  coefficient of $\chi_{\beta+\rho}$ in equation (\ref{eq:decomp4}) is possibly zero.  Showing that all such coefficients are nonzero is equivalent to Conjecture \ref{c:conjkostant} of Kostant in view of the characterization given in Lemma \ref{l:setdescr}.  We further remark that contribution from the second sum is only negative as  structure coefficients satisfy $c_{\rho \rho}^{\beta+\rho}\leq m_\rho(\beta)$.
\end{remark}

\subsection{The proof of Theorem \ref{th:main1}}
Now we look at the special case when $\beta\in \Pi(\rho)$ is of the form $w_J\rho$ for any subset $J$ of $\{1,2,\cdots, r\}$. 

We will verify that $V(w_J\rho+\rho)$  is a component of $V(\rho)\otimes V(\rho)$ by exhibiting that the second sum in equation (\ref{eq:decomp4}) is an empty sum, and thus vanishes.  This is in fact a special case of a result in Kac-Wakimoto (\cite{KW} Lemma 8.1b), as $w_J\rho+\rho$ is dominant. The  proof can also be given using PRV's  description of tensor product multiplicities \cite{PRV} as opposed to using  Weyl character formula as we do here.

Let $(,)$ be the standard $W$-invariant bilinear form on $\frg$.  As is well known (see e.g. \cite{K} Lemma 11.4), any weight $\mu \in \Pi(\rho)$ satisfies 
\begin{equation}\label{in:weights}
(\rho,\rho)-(\mu,\mu)\geq 0, 
\end{equation}
with equality if and only if $\mu$ is in the $W$-orbit of $\rho$.  Consider the weight $w_J\rho+2\rho-w(2\rho)$ for any $w\neq 1 \in W$.  Then

\begin{equation}
\begin{split}
(w_J\rho+2\rho-w(2\rho),w_J\rho+2\rho-w(2\rho))-(\rho,\rho)
=(w_J\rho+2\rho,w_J\rho+2\rho)\\+4(\rho,\rho)
+2(w_J\rho+2\rho, 2\rho-w(2\rho))-2(w_J\rho+2\rho,2\rho)-(\rho,\rho)
\end{split}
\end{equation}
As $\rho-w(\rho)$ is a sum of certain positive roots for any $w\neq 1\in W$ and $w_J\rho+2\rho$ is strictly dominant, we have that  $$(w_J\rho+2\rho, 2\rho-w(2\rho))>0.$$ Therefore,
\begin{equation}\label{i:notweight}
\begin{split}
(w_J\rho+2\rho-w(2\rho),w_J\rho+2\rho-w(2\rho))-(\rho,\rho)
>(w_J\rho+2\rho,w_J\rho+2\rho)\\+4(\rho,\rho)
-2(w_J\rho+2\rho,2\rho)-(\rho,\rho)\\
=(w_J\rho+2\rho-2\rho,w_J\rho+2\rho-2\rho)-(\rho,\rho)=0.
\end{split}
\end{equation}
Consequently, for any $w\neq 1 \in W$, $w_J\rho+2\rho-w(2\rho)$ is not in $\Pi(\rho)$, as  the inequality (\ref{in:weights}) is violated. This implies that the second summand in equation (\ref{eq:decomp4}) is zero, as there is no weight $\beta'\in \Pi(\rho)$ of the form $\beta'=w_J\rho+2\rho-w(2\rho)$ for any $w\neq 1\in W$.  Then, by equation (\ref{eq:decomp4}) again,  $c_{\rho \rho}^{w_J\rho+\rho}=1$ as $m_\rho(w_J\rho)=m_\rho(\rho)=1$, proving the claim in Theorem \ref{th:main1}. 
This establishes the validity of Conjecture \ref{c:conjkostant} for the vertex set.

\section{Saturation and the proof of Theorem \ref{th:main2}}\label{s:saturation}

Let us recall the following definition.  

\begin{definition}\label{d:sfactor}
A a positive integer $d$ is called a \textit{saturation factor} for $\frg$ if for any dominant integral weights $\lambda$, $\mu$, and $\nu$ such that $\lambda+\mu+\nu$ lies in the root lattice of $\frg$, and $(V(N\lambda)\otimes V(N\mu)\otimes V(N\nu))^\frg\neq 0$ for some integer $N>0$, then $(V(d\lambda)\otimes V(d\mu)\otimes V(d\nu))^\frg\neq 0$.  
\end{definition}
Clearly if $d$ is a saturation factor then so is its any integer multiple.  

By Knutson-Tau  \cite{KT}, $d = 1$ for $\frg=\mathfrak{sl_{r+1}}$.  By works of Kapovich-Millson \cite{KM}, improved in classical types by Belkale-Kumar \cite{BK} and Sam \cite{Sa}, for $\frg$ of types $B_r$ and $C_r$, $d=2$, and for $D_r$, $d=4$.  These results in literature, determining the smallest integer $d$, are referred as saturation theorems. 

%
\subsubsection*{Proof of Theorem \ref{th:main2}}

As $P(2\rho)$ is convex and $2\rho$ itself in the root lattice of $\frg$, any rational convex combination of vertices 
$$\dfrac{1}{\sum b_J}\sum b_J v_J,\;b_J\in \mathbb{Z}_{\geq 0}$$ lying in the root lattice of $\frg$ is in one to one correspondence with dominant integral weights of the form $2\rho-\sum_{i=1}^r a_i \alpha_i$ with each $a_i\in\mathbb{Z}_{\geq 0}$. 

Now we use the semigroup property of the tensor cone under addition (see e.g.~\cite{KLM}). Denote $\sum b_J$ by $N$. Then,  
as $V(v_J)\subset V(\rho)\otimes V(\rho)$ by Theorem \ref{th:main1}, we have
$$V(\sum b_J v_J )=V(N(2\rho-\sum_{i=1}^r a_i \alpha_i))\subset V(N\rho)\otimes V(N\rho).$$  
By saturation principle, 
$$V(d(2\rho-\sum_{i=1}^r a_i \alpha_i))\subset V(d\rho)\otimes V(d\rho),$$  where $d$ is the saturation factor of $\frg$. Hence the theorem. 

As $d=1$ for $\frg=\mathfrak{sl_{r+1}}$, Kostant's Conjecture \ref{c:conjkostant} holds in this case.

\subsection*{Acknowldegement}
I thank Prakash Belkale for his generous feedback on this manuscript.

\end{document}